
\documentclass[11pt,reqno]{my-glm-t}

\usepackage{amsmath,amsthm,amscd,amsfonts,amssymb,graphicx,color,xcolor}
\RequirePackage[backref,colorlinks=true]{hyperref}


\begin{document}

\title{{\color{green!42!blue}\LARGE{\bf 
Local existence and uniqueness for a fractional SIRS model with Mittag--Leffler law}}}

\author[a1]{Moulay Rchid Sidi Ammi\corref{c1}}
\ead{rachidsidiammi@yahoo.fr}

\author[a2]{Mostafa Tahiri}
\ead{my.mustafa.tahiri@gmail.com}

\author[a3]{Delfim F. M. Torres}
\ead{delfim@ua.pt}

\address[a1]{Department of Mathematics, AMNEA Group, Faculty of Sciences and Techniques,\\
Moulay Ismail University of Meknes, B.~P. 509, Errachidia, Morocco. \vskip 0.1cm}

\address[a2]{Department of Mathematics, AMNEA Group, Faculty of Sciences and Techniques,\\
Moulay Ismail University of Meknes, B.~P. 509, Errachidia, Morocco. \vskip 0.1cm}

\address[a3]{Center for Research and Development in Mathematics and Applications (CIDMA),\\
Department of Mathematics, University of Aveiro, 3810-193 Aveiro, Portugal.}

\cortext[c1]{Corresponding author}


\setcounter{page}{61}
\vol{10(2) (2021)} 
\pages{61--71}

\recivedat{30 Apr 2021; Revised: 25 May 2021; Accepted: 25 Jun 2021}

\authors{M. R. Sidi Ammi, M. Tahiri, D. F. M. Torres}

\doi{\href{https://doi.org/10.31559/glm2021.10.2.7}{https://doi.org/10.31559/glm2021.10.2.7}}


\begin{abstract}
In this paper, we study an epidemic model with Atangana--Baleanu--Caputo 
(ABC) fractional derivatives. We obtain a special solution using an iterative 
scheme via Laplace transformation. Uniqueness and existence of solution 
using the Banach fixed point theorem are studied. A detailed analysis 
of the stability of the special solution is presented. Finally, 
our generalized model in the ABC derivative sense is solved 
numerically by the Adams--Bashforth--Moulton method.

\begin{keyword}
Epidemic model \sep Atangana--Baleanu--Caputo fractional derivative
\sep Fixed point theorem \sep Numerical simulations.

\MSC{34A08\sep 47H10\sep 26A33\sep 34K28.}
\end{keyword}
\end{abstract}

\maketitle


\newtheorem{theorem}{Theorem}[section]
\newtheorem{lemma}[theorem]{Lemma}
\newtheorem{proposition}[theorem]{Proposition}
\newtheorem{corollary}[theorem]{Corollary}
\newtheorem{question}[theorem]{Question}

\theoremstyle{definition}
\newtheorem{definition}[theorem]{Definition}
\newtheorem{algorithm}[theorem]{Algorithm}
\newtheorem{conclusion}[theorem]{Conclusion}
\newtheorem{problem}[theorem]{Problem}

\theoremstyle{remark}
\newtheorem{remark}[theorem]{Remark}
\numberwithin{equation}{section}


\section{Introduction}
\label{sec1}

The generalization of mathematical models in epidemiology 
has the purpose of providing a good description, closer 
to reality, by using a proper concept of derivatives, 
particularly a notion of differentiation of fractional order. 
Since the fractional order can be any positive real $\alpha$, 
one can choose the one that better fits available data \cite{MR3719831}.
Therefore, we can adjust the model to real data for better predict 
the future evolution of the disease \cite{MyID:419,MR3789859}. 
Moreover, virus propagation is typically discontinuous and 
classical differential models cannot describe them in a proper way.
In contrast, fractional systems deal naturally with such discontinuous 
properties \cite{MR3673702,MR3854267}.

The virus propagation is similar to heat transmission or moistness penetrability 
in a porous medium, which can be exactly modelled by fractional calculus \cite{ref35,ref34}.
Authors in \cite{ref36,ref37} gave a geometrical description of fractional calculus, 
concluding that the fractional order can be related with the fractal dimension. 
The relationship between fractal dimension and fractional calculus has been reported 
by several different authors: see \cite{MR3320677,ref38} and references therein. 
The fractional complex transform \cite{ref39,ref40} is an approximate transform 
of a fractal space (time) to a continuous one, and it is now widely used in 
fractional calculus \cite{MR3571716,ref41,ref42}.

Several definitions were proposed in the literature for fractional differentiation. 
Some authors have used the Caputo fractional derivative because of some useful 
properties provided by this derivative \cite{ref17,ref19,ref22,ref25}, 
especially in the analysis of the spread of diseases \cite{ref10,ref21,ref24,ref26}. 
However, this derivative has some limitations, for instance, the kernel therein 
has a singularity. To solve this problem, Caputo and Fabrizio have proposed 
a derivative with fractional order that has a kernel with no singularity. Recently, 
the Caputo--Fabrizio derivative was used by few researchers to solve 
some real world problems, see for example \cite{ref18,ref20}. However, 
many other researchers testified that the Caputo--Fabrizio operator 
is nothing more than a filter with a fractional regulator. They based their 
argument upon the fact that the kernel used in this design is local and 
the associate integral is the average of the given function and its integral. 
In many applications of fractional differentiation based on the power law $x^{-\sigma}$, 
the Mittag--Leffler function is mostly present. The Mittag--Leffler function is 
a generalization of the exponential function. In addition, it is also a non-local 
kernel \cite{ref3,ref16,ref28}. To solve the failures of the Caputo--Fabrizio 
derivative, the fractional derivative based on the Mittag--Leffler function 
was introduced and used in some new problems with great success 
\cite{ref29,ref30,ref31,ref32}. It is important to know that where 
the power based on the $x^{-\sigma}$ function relaxes then the Mittag--Leffler 
function raises more complex and interesting dynamics.

Several research papers have been published using this new concept 
of fractional differentiation with Mittag--Leffler function. The results 
obtained in \cite{ref3} reveal that the new concept is more adequate 
for modelling real world problems, to take into account non-local phenomena 
as well as memory effects.

Some infectious diseases confer temporally acquired immunity. 
This type of diseases can be modeled by the SIRS model. The total population 
$N$ is divided into three compartments with $N = S + I + R$, where $S$ 
is the number of susceptible, $I$ is the number of infectious individuals 
and $R$ is the number of recovered individuals \cite{ref9}.

The main aim of our work is to show, by applying the Picard iteration method, 
the existence and uniqueness of solution to the Atangana--Baleanu--Caputo 
fractional derivative SIRS epidemic model with a saturated treatment 
and an incidence function as presented in \cite{ref9}, which generalizes
the classical bilinear incidence rate, the saturated incidence rate, 
the Beddington--DeAngelis functional response \cite{ref11,ref23} 
and the Crowley--Martin functional response \cite{ref12}.

Picard iteration has more theoretical value than practical one. 
Indeed, finding an approximate solution using this method 
is almost impractical for complicated functions of the second member 
of a fractional differential equation. For that, in this work we will 
use a numerical Adams--Bashforth--Moulton method \cite{ref27}.

The paper is organized as follows. In Section~\ref{sec2}, we give the 
definitions of the new fractional derivative with non-singular and non-local kernel. 
The existence of solution for our epidemic model via Picard-Lindel$\ddot{o}$f method 
is investigated in Section~\ref{sec3}. In Section~\ref{sec4}, we provide a
stability analysis of the numerical scheme obtained by Picard's iteration method. 
Finally, in Section~\ref{sec5}, some numerical results obtained at different 
instances of fractional order are presented.


\section{Basic properties of the new fractional derivative}
\label{sec2}

In this section, we present the definitions of the new fractional derivative 
with non-singular and non-local kernel \cite{ref3}.

\begin{definition} 
\label{def2.1}
Let $f \in H^1(0,T)$, $T>0$, $0 \leq \alpha < 1$. The 
Atangana--Baleanu fractional derivative in Caputo sense is given by
\begin{equation} 
\label{eq2.1}
^{ABC}_{0}D{_t^{\alpha}} (f(t)) = \dfrac{B(\alpha)}{1-\alpha} 
\int_{0}^t f' (s) E_{\alpha}\bigg[-\alpha \dfrac{(t-s)^{\alpha}}{1-\alpha}\bigg]ds,
\end{equation}
where the kernel $E_{\alpha}$ is the Mittag--Leffler function of one parameter 
and $B(\alpha)$ is a normalization function such that  $B(0)=B(1)=1$ \cite{ref4}.
\end{definition}

\begin{definition}
\label{def2.2} 
The fractional integral of order $\alpha$ is defined by 
\begin{equation} 
\label{eq2.2}
^{AB}_{0}I_{t}^\alpha (f(t)) = \dfrac{1-\alpha}{B(\alpha)}f(t) 
+ \dfrac{\alpha}{B(\alpha)\Gamma(\alpha)}\int_{0}^t f(u) (t-u)^{\alpha -1}du.
\end{equation}
Here $\Gamma(\cdot)$ is the Euler Gamma function, 
which is defined as $\Gamma(x)=\displaystyle \int_0^\infty t^{x-1} e^{-t} dt$.
\end{definition}

\begin{remark} 
\label{rk2.1}
When $\alpha=0$ in \eqref{eq2.2}, the initial function is obtained; 
and when $\alpha=1$ we obtain the ordinary integral.
\end{remark}


\section{Existence and uniqueness of solution}
\label{sec3}

In this section, we extend the SIRS model to a fractional-order model. 
The reasons and motivations for such extension have been presented 
in the introduction. Nevertheless, it is important noting that the concept 
of local derivative that is used to describe the rate of change has failed 
to model accurately some complex real-world problems. Due to this failure, 
the concept of fractional differentiation, based on the convolution 
of $x^{-\sigma}$, was introduced, and also failed in some cases due to the 
disc of convergence of this function. The Mittag--Leffler function 
can be used in order to handle more physical problems.

We consider the following system:
\begin{equation}
\label{eq3.1}
\begin{aligned}
^{ABC}_{0}D{_t^{\alpha}}S(t) &= \Lambda-\mu S(t)
-\dfrac{\beta S(t) I(t)}{1+k_1 S(t)+k_2 I(t)+k_3 S(t) I(t)}+\lambda R(t),\\[2ex]
^{ABC}_{0}D{_t^{\alpha}}I(t) &= \dfrac{\beta S(t) I(t)}{1+k_1 S(t)+k_2 I(t)
+k_3 S(t) I(t)}-(\mu+d+r)I(t)-\dfrac{\xi I(t)}{1+\gamma I(t)},\\[2ex]
^{ABC}_{0}D{_t^{\alpha}}R(t) &= r I(t)+\dfrac{\xi I(t)}{1+\gamma I(t)}-(\mu+\lambda)R(t),
\end{aligned}
\end{equation}
subject to initial conditions
\begin{equation}
\label{eq3.2}
S(0)\geq 0,\quad I(0)\geq0, \quad R(0)\geq 0.
\end{equation}

The positive constants $\Lambda$, $\beta$, $\mu$, $r$, $d$, $\lambda$  
are the recruitment rate of the population, the infection rate, 
the natural death rate, the recovery rate of the infective individuals, 
the death rate due to disease, the rate that recovered individuals 
lose immunity and return to the susceptible class, respectively.  
While contacting with infected individuals, the susceptible become 
infected at the incidence rate $\beta SI/(1+k_1 S+k_2 I+k_3 SI)$, 
with $k_1$, $k_2$ and $k_3$ non-negative constants \cite{ref9}.
Through treatment, the infected individuals recover at a saturated treatment 
function $\xi I(t)/(1+\gamma I(t))$, where $\xi$ is positive, $\gamma$ 
is nonnegative, and $1/(1+\gamma I(t))$ describes the reverse effect 
of the infected being delayed for treatment. When $\gamma=0$, 
the saturated treatment function stays to the linear one \cite{ref6}.


\subsection{Iterative scheme with Laplace transform}
\label{3.1}

\begin{theorem} 
\label{th3.1.1}
For $\alpha \in [0,1]$, the following 
time fractional ordinary differential equation
\begin{equation} \label{eq3.1.1}
^{ABC}_{0}D{_t^{\alpha}} (f(t)) = u(t),
\end{equation}
has a unique solution, namely
\begin{equation} \label{eq3.1.2}
f(t) = f(0)+\dfrac{1-\alpha}{B(\alpha)}u(t) 
+ \dfrac{\alpha}{B(\alpha)\Gamma(\alpha)}\int_{0}^t u(p) (t-p)^{\alpha -1}dp.
\end{equation} 	
\end{theorem}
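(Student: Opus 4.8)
The plan is to apply the Laplace transform $\mathcal{L}$ to both sides of \eqref{eq3.1.1} and then invert the result term by term. The single ingredient needed is the Laplace transform of the ABC derivative \eqref{eq2.1}. Since \eqref{eq2.1} is the convolution of $f'$ with the Mittag--Leffler kernel $E_{\alpha}[-\alpha t^{\alpha}/(1-\alpha)]$, I would combine the convolution theorem with the classical identity $\mathcal{L}\{E_{\alpha}(-\lambda t^{\alpha})\}(s)=s^{\alpha-1}/(s^{\alpha}+\lambda)$ evaluated at $\lambda=\alpha/(1-\alpha)$, together with $\mathcal{L}\{f'\}(s)=sF(s)-f(0)$ where $F=\mathcal{L}\{f\}$. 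After clearing the factor $(1-\alpha)$ from numerator and denominator, this gives
\begin{equation*}
\mathcal{L}\bigl\{{}^{ABC}_{0}D{_t^{\alpha}} f\bigr\}(s)=B(\alpha)\,\frac{s^{\alpha-1}}{(1-\alpha)s^{\alpha}+\alpha}\,\bigl(sF(s)-f(0)\bigr).
\end{equation*}

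First I would set this expression equal to $U(s)=\mathcal{L}\{u\}(s)$ and solve the resulting algebraic equation for $F(s)$. Simplifying the rational factor through $\frac{(1-\alpha)s^{\alpha}+\alpha}{s^{\alpha-1}}=(1-\alpha)s+\alpha s^{1-\alpha}$ yields
\begin{equation*}
F(s)=\frac{f(0)}{s}+\frac{1-\alpha}{B(\alpha)}\,U(s)+\frac{\alpha}{B(\alpha)}\,s^{-\alpha}U(s).
\end{equation*}
The next step is to recognize each term under inversion: $\mathcal{L}^{-1}\{f(0)/s\}=f(0)$; the middle term returns $\frac{1-\alpha}{B(\alpha)}u(t)$ directly; and for the last term I would use $s^{-\alpha}=\mathcal{L}\{t^{\alpha-1}/\Gamma(\alpha)\}(s)$, so that $s^{-\alpha}U(s)$ is the transform of the convolution $\frac{1}{\Gamma(\alpha)}\int_0^t u(p)(t-p)^{\alpha-1}\,dp$. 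Collecting the three inverse transforms reproduces exactly \eqref{eq3.1.2}.

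Uniqueness is then immediate: the Laplace transform is injective on the relevant function class, so the representation of $F(s)$ above is forced, and hence so is $f$. I expect the only delicate point to be the computation of $\mathcal{L}\{{}^{ABC}_{0}D{_t^{\alpha}} f\}$ itself---specifically, handling the Mittag--Leffler kernel through the convolution theorem and carrying the normalization constant $B(\alpha)$ and the factor $(1-\alpha)$ correctly through the algebra; once that transform is in hand, the remaining inversions are standard table look-ups. An alternative route, bypassing Laplace transforms altogether, is to observe that the right-hand side of \eqref{eq3.1.2} is nothing but $f(0)+{}^{AB}_{0}I_{t}^{\alpha}(u)$ with the integral operator of Definition~\ref{def2.2}, so the claim is equivalent to the statement that ${}^{AB}_{0}I_{t}^{\alpha}$ inverts ${}^{ABC}_{0}D{_t^{\alpha}}$ up to the constant $f(0)$; verifying that composition identity directly would give the same conclusion.
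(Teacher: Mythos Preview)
Your proposal is correct and follows essentially the same route as the paper: apply the Laplace transform to both sides, use the known transform of the ABC derivative to obtain an algebraic identity for $F(s)$, split it into the three summands $f(0)/s$, $\frac{1-\alpha}{B(\alpha)}U(s)$, and $\frac{\alpha}{B(\alpha)}s^{-\alpha}U(s)$, and invert using the convolution theorem with $\mathcal{L}\{t^{\alpha-1}/\Gamma(\alpha)\}=s^{-\alpha}$. The only cosmetic difference is that the paper quotes the Laplace transform of ${}^{ABC}_{0}D_t^{\alpha}f$ from \cite{ref16} rather than deriving it via the Mittag--Leffler kernel as you do, and the paper does not spell out the uniqueness argument or the alternative ${}^{AB}_{0}I_t^{\alpha}$ viewpoint that you mention.
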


\begin{proof}
Using the Laplace transformation on both sides of the equation \eqref{eq3.1.1}
we get
\begin{equation*}
L\big[^{ABC}_{0}D{_t^{\alpha}} (f(t))\big](p) = L\big[u(t)\big](p), \hspace{1cm} p>0
\end{equation*}
and, according to \cite[Theorem 3]{ref16}, we have that
\begin{gather*}
\dfrac{B(\alpha)}{1-\alpha} \dfrac{p^{\alpha}L\{f(t)\}(p)
-p^{\alpha-1}f(0)}{p^{\alpha}+\dfrac{\alpha}{1-\alpha}} = L\{u(t)\}(p),
\end{gather*}
which is equivalent to
\begin{gather*}
L\{f(t)\}(p) = \dfrac{1}{p}f(0) + \dfrac{1-\alpha}{B(\alpha)}L\{u(t)\}(p) 
+ \dfrac{\alpha}{p^{\alpha}B(\alpha)}L\{u(t)\}(p).
\end{gather*}
Now, we use the inverse Laplace transform
\begin{gather*}
f(t) = f(0) + \dfrac{1-\alpha}{B(\alpha)}u(t) 
+ L^{-1}\bigg\{\dfrac{\alpha}{p^{\alpha}B(\alpha)}L\{u(t)\}(p)\bigg\}(t)
\end{gather*}
to obtain
\begin{gather*}
\dfrac{\alpha}{p^{\alpha}B(\alpha)}=\dfrac{\alpha}{B(\alpha)}
L\bigg\{\dfrac{t^{\alpha-1}}{\Gamma(\alpha)}\bigg\}(p).
\end{gather*}
Let $F(p)=\dfrac{\alpha}{B(\alpha)}L\bigg\{\dfrac{t^{\alpha-1}}{\Gamma(\alpha)}\bigg\}(p)$ 
and $G(p)=L\{u(t)\}(p)$. Then, applying the convolution theorem, we obtain
\begin{gather*}
\begin{aligned}
L^{-1}\bigg\{\dfrac{\alpha}{p^{\alpha}B(\alpha)}L\{u(t)\}(p)\bigg\}(t)
&=L^{-1}\bigg\{F(p)\times G(p) \bigg\}(t)=\dfrac{\alpha}{B(\alpha)}
\bigg(u(t)*\dfrac{t^{\alpha-1}}{\Gamma(\alpha)}\bigg)(p)\\
\quad&=\dfrac{\alpha}{B(\alpha)\Gamma(\alpha)}\int_{0}^t u(p) (t-p)^{\alpha -1}dp.
\end{aligned}
\end{gather*}
Hence, the result is proved.
\end{proof}

Using Theorem~\ref{th3.1.1}, our system is equivalent to 
\begin{gather}
\label{eq3.1.3}
\begin{aligned}
S(t)-S(0)&=\dfrac{1-\alpha}{B(\alpha)}\bigg\{\Lambda-\mu S(t)
-\dfrac{\beta S(t) I(t)}{1+k_1 S(t)+k_2 I(t)+k_3 S(t) I(t)}+\lambda R(t)\bigg\}\\
&\quad +\dfrac{\alpha}{B(\alpha)\Gamma(\alpha)}\int_{0}^t (t-p)^{\alpha -1}\bigg\{\Lambda-\mu S(p)\\
&\quad -\dfrac{\beta S(p) I(p)}{1+k_1 S(p)+k_2 I(p)+k_3 S(p) I(p)}+\lambda R(p)\bigg\} dp,\\
I(t)-I(0)&=\dfrac{1-\alpha}{B(\alpha)}\bigg\{\dfrac{\beta S(t) I(t)}{1+k_1 S(t)+k_2 I(t)+k_3 S(t) I(t)}-(\mu+d+r)I(t)\\
&\quad -\dfrac{\xi I(t)}{1+\gamma I(t)}\bigg\}-\dfrac{\alpha}{B(\alpha)\Gamma(\alpha)}
\int_{0}^t (t-p)^{\alpha -1}\bigg\{(\mu+d+r)I(p)\\
&\quad +\dfrac{\xi I(p)}{1+\gamma I(p)}-\dfrac{\beta S(p) I(p)}{1+k_1 S(p)+k_2 I(p)+k_3 S(p) I(p)}\bigg\}dp,\\
R(t)-R(0)&=\dfrac{1-\alpha}{B(\alpha)}\bigg\{r I(t)+\dfrac{\xi I(t)}{1+\gamma I(t)}-(\mu+\lambda)R(t)\bigg\}\\
&\quad +\dfrac{\alpha}{B(\alpha)\Gamma(\alpha)}\int_{0}^t (t-p)^{\alpha -1}\bigg\{r I(p)
+\dfrac{\xi I(p)}{1+\gamma I(p)}-(\mu+\lambda)R(p)\bigg\}dp.
\end{aligned}
\end{gather}

The iterative scheme of the system \eqref{eq3.1.3} is given by
\begin{equation}
\label{eq3.1.4}
S_0(t)=S(0);\hspace{0.5cm}
I_0(t)=I(0);\hspace{0.5cm}
R_0(t)=R(0).
\end{equation}
\begin{equation}
\label{eq3.1.5}
\begin{aligned}
S_{n+1}(t)&=\dfrac{1-\alpha}{B(\alpha)}\bigg\{\Lambda-\mu S_n(t)
-\dfrac{\beta S_n(t) I_n(t)}{1+k_1 S_n(t)+k_2 I_n(t)+k_3 S_n(t) I_n(t)}+\lambda R_n(t)\bigg\}\\
&\quad +\dfrac{\alpha}{B(\alpha)\Gamma(\alpha)}\int_{0}^t (t-p)^{\alpha -1}\bigg\{\Lambda-\mu S_n(p)\\
&\quad -\dfrac{\beta S_n(p) I_n(p)}{1+k_1 S_n(p)+k_2 I_n(p)+k_3 S_n(p) I_n(p)}+\lambda R_n(p)\bigg\} dp,\\
I_{n+1}(t)&=\dfrac{1-\alpha}{B(\alpha)}\bigg\{\dfrac{\beta S_n(t) I_n(t)}{1
+k_1 S_n(t)+k_2 I_n(t)+k_3 S_n(t) I_n(t)}-(\mu+d+r)I_n(t)\\
&\quad -\dfrac{\xi I_n(t)}{1+\gamma I_n(t)}\bigg\}-\dfrac{\alpha}{B(\alpha)\Gamma(\alpha)}
\int_{0}^t (t-p)^{\alpha -1}\bigg\{(\mu+d+r)I_n(p)\\
&\quad +\dfrac{\xi I_n(p)}{1+\gamma I_n(p)}-\dfrac{\beta S_n(p) I_n(p)}{1+k_1 S_n(p)
+k_2 I_n(p)+k_3 S_n(p) I_n(p)}\bigg\}dp,\\
R_{n+1}(t)&=\dfrac{1-\alpha}{B(\alpha)}\bigg\{r I_n(t)
+\dfrac{\xi I_n(t)}{1+\gamma I_n(t)}-(\mu+\lambda)R_n(t)\bigg\}\\
&\quad +\dfrac{\alpha}{B(\alpha)\Gamma(\alpha)}\int_{0}^t 
(t-p)^{\alpha -1}\bigg\{r I_n(p)+\dfrac{\xi I_n(p)}{1+\gamma I_n(p)}
-(\mu+\lambda)R_n(p)\bigg\}dp.
\end{aligned}
\end{equation}

If we take the limit, we expect to obtain the exact solution.


\subsection{Existence of solution via Picard--Lindel$\ddot{o}$f method}
\label{sec3.2}

We define the operators
\begin{gather}
\begin{aligned}
f_1(t,\Omega(t)) &= \Lambda-\mu S(t)
-\dfrac{\beta S(t) I(t)}{1+k_1 S(t)+k_2 I(t)+k_3 S(t) I(t)}+\lambda R(t),\\
f_2(t,\Omega(t)) &= \dfrac{\beta S(t) I(t)}{1+k_1 S(t)+k_2 I(t)
+k_3 S(t) I(t)}-(\mu+d+r)I(t)-\dfrac{\xi I(t)}{1+\gamma I(t)},\\
f_3(t,\Omega(t)) &= r I(t)+\dfrac{\xi I(t)}{1+\gamma I(t)}-(\mu+\lambda)R(t),
\end{aligned}
\end{gather}
and the matrix form of system \eqref{eq3.1} subject to conditions \eqref{eq3.2}:
\begin{equation}
\label{eq3}
\begin{aligned}
^{ABC}_{0}D{_t^{\alpha}}\Omega(t)=F(t,\Omega(t)),\hspace{0.5cm}
\Omega(0)=\Omega_0,
\end{aligned}
\end{equation}
where $\Omega(t)=\big(S(t),I(t),R(t)\big)$, $\Omega_0=(S_0,I_0,R_0)$
and $F(t,\Omega(t))=\big(f_1(t,\Omega(t)),f_2(t,\Omega(t)),f_3(t,\Omega(t))\big)$.

\begin{lemma}
\label{lem3.2.1}
The function $F$ is Lipschitz continuous on $[0,T] \times \mathbf{B}(\Omega_0,\rho)$ with 
$$
[0,T] \times \mathbf{B}(\Omega_0,\rho)=\{(t,\Omega(t))
\in [0,T] \times\mathbb{R}_+^{3} /  
\underset{t\in [0, T]}{sup}\|\Omega(t)-\Omega_0\|_1 \leq \rho\},
$$
i.e., there exists a constant $L\in\mathbb{R}_+$, $\forall (t,\Omega_1(t)), 
(t,\Omega_2(t))\in [0,T] \times \mathbf{B}(\Omega_0,\rho)$
\begin{gather}
\|F(t, \Omega_1(t))-F(t, \Omega_2(t))\|_1\leq L\|\Omega_1(t)-\Omega_2(t)\|_1,
\end{gather}
\end{lemma}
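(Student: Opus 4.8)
The plan is to estimate each scalar component $f_i$ separately and then recombine in the $\ell^1$ norm, exploiting that on $\mathbf{B}(\Omega_0,\rho)$ every coordinate is trapped in a bounded interval and that all denominators are bounded below by $1$. First I would record that $\|\Omega-\Omega_0\|_1\le\rho$ forces $\|\Omega\|_1\le M:=\|\Omega_0\|_1+\rho$, so that $0\le S,I,R\le M$ throughout the set; since the coordinates are nonnegative and $k_1,k_2,k_3,\gamma\ge 0$, both denominators satisfy $1+k_1S+k_2I+k_3SI\ge 1$ and $1+\gamma I\ge 1$, which rules out any singularity. The purely linear contributions in each $f_i$ (the terms $-\mu S$, $\lambda R$, $-(\mu+d+r)I$, $rI$, $-(\mu+\lambda)R$, and the constant $\Lambda$) are Lipschitz with constants equal to their coefficients, so the whole problem reduces to controlling the two genuinely nonlinear pieces: the incidence term $g(S,I)=\beta SI/(1+k_1S+k_2I+k_3SI)$ and the saturated treatment $h(I)=\xi I/(1+\gamma I)$.

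For the treatment term the estimate is immediate: a direct subtraction gives $h(I_1)-h(I_2)=\xi(I_1-I_2)/\big((1+\gamma I_1)(1+\gamma I_2)\big)$, and since the denominator is $\ge 1$ we get $|h(I_1)-h(I_2)|\le \xi|I_1-I_2|$ with no further hypotheses needed.

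The main obstacle is the incidence term $g$, and I would handle it by putting the difference $g(S_1,I_1)-g(S_2,I_2)$ over the common denominator $D_1D_2$, where $D_j=1+k_1S_j+k_2I_j+k_3S_jI_j\ge 1$. The pleasant feature is that the quartic $k_3$ contribution to the numerator cancels identically, and after adding and subtracting mixed terms the numerator collapses to a sum of the form $\big[S_1+k_1S_1S_2\big](I_1-I_2)+\big[I_2+k_2I_1I_2\big](S_1-S_2)$. Using $0\le S_j,I_j\le M$ to bound the bracketed coefficients by $M+k_1M^2$ and $M+k_2M^2$, and using $D_1D_2\ge 1$ for the denominator, yields $|g(S_1,I_1)-g(S_2,I_2)|\le \beta(M+k_2M^2)|S_1-S_2|+\beta(M+k_1M^2)|I_1-I_2|$. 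The only delicate point is the bookkeeping in this cancellation, but no analytic difficulty arises once the lower bound on the denominators is in hand; an equivalent route is to note $g,h\in C^1(\mathbb{R}_+^3)$, observe that $\mathbf{B}(\Omega_0,\rho)$ is convex (an $\ell^1$-ball intersected with the convex cone $\mathbb{R}_+^3$), and bound the gradients on this bounded convex set, invoking the mean value inequality.

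Finally I would assemble the three component bounds. Writing $\|F(t,\Omega_1)-F(t,\Omega_2)\|_1=\sum_{i=1}^3|f_i(t,\Omega_1)-f_i(t,\Omega_2)|$ and inserting the linear, incidence, and treatment estimates, each of the differences $|S_1-S_2|$, $|I_1-I_2|$, $|R_1-R_2|$ appears multiplied by an explicit constant built from $\mu,\beta,d,r,\lambda,\xi,k_1,k_2,\gamma$ and $M$. Taking $L$ to be the largest such aggregate coefficient gives $\|F(t,\Omega_1)-F(t,\Omega_2)\|_1\le L\|\Omega_1-\Omega_2\|_1$, and since $F$ is autonomous this $L$ is uniform in $t\in[0,T]$, which is exactly the claimed Lipschitz estimate.
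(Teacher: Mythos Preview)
Your proposal is correct and follows essentially the same approach as the paper: bound both denominators below by $1$, handle the treatment term by direct subtraction, treat the incidence term via a common denominator together with the coordinate bounds coming from the ball, and then take $L$ to be the largest of the resulting aggregate coefficients. Your write-up is in fact slightly tidier than the paper's, since you make the cancellation of the $k_3$ contribution in the numerator explicit (the paper uses it without comment) and you use a single uniform bound $M=\|\Omega_0\|_1+\rho$ on all coordinates rather than the paper's separate $\rho+S_0$ and $\rho+I_0$.
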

where $\|\Omega(t)\|_1 = \sum_{i=1}^{3}|\Omega_i(t)|$ is the Manhattan norm.

\begin{proof}
We shall prove that $F$ satisfies the Lipschitz condition 
in the second argument $\Omega$:
\begin{gather}
\begin{aligned}
&\|F(t, \Omega_1(t))-F(t, \Omega_2(t))\|_1 
=  |f_1(t, \Omega_1(t))-f_1(t, \Omega_2(t))|+|f_2(t, \Omega_1(t))\\
\quad&-f_2(t, \Omega_2(t))|+|f_3(t, \Omega_1)-f_3(t, \Omega_2)|\\
\quad&=\bigg|\Lambda-\mu S_1(t)-\dfrac{\beta S_1(t) I_1(t)}{1
+k_1 S_1(t)+k_2 I_1(t)+k_3 S_1(t) I_1(t)}+\lambda R_1(t)\\
\quad& -\big(\Lambda-\mu S_2(t)-\dfrac{\beta S_2(t) I_2(t)}{1
+k_1 S_2(t)+k_2 I_2(t)+k_3 S_2(t) I_2(t)}+\lambda R_2(t)\big)\bigg|\\
\quad&+\bigg|\dfrac{\beta S_1(t) I_1(t)}{1+k_1 S_1(t)+k_2 I_1(t)
+k_3 S_1(t) I_1(t)}-(\mu+d+r)I_1(t)-\dfrac{\xi I_1(t)}{1+\gamma I_1(t)}\\
\quad&-\bigg(\dfrac{\beta S_2(t) I_2(t)}{1+k_1 S_2(t)+k_2 I_2(t)
+k_3 S_2(t) I_2(t)}-(\mu+d+r)I_2(t)-\dfrac{\xi I_2(t)}{1+\gamma I_2(t)}\bigg)\bigg|\\
\quad&+\bigg|r I_1(t)+\dfrac{\xi I_1(t)}{1+\gamma I_1(t)}-(\mu+\lambda)R_1(t)
-\big(r I_2(t)+\dfrac{\xi I_2(t)}{1+\gamma I_2(t)}-(\mu+\lambda)R_2(t)\big)\bigg|.
\end{aligned}
\end{gather}
We reduce the next two fractions to the same denominator
$$
D=\big(1+k_1 S_1(t)+k_2 I_1(t)+k_3 S_1(t) I_1(t)\big)\big(1
+k_1 S_2(t)+k_2 I_2(t)+k_3 S_2(t) I_2(t)\big).
$$
We note that $D>1$, hence
\begin{gather}
\begin{aligned}
&\bigg|\dfrac{\beta S_1(t) I_1(t)}{1+k_1 S_1(t)+k_2 I_1(t)
+k_3 S_1(t) I_1(t)}-\dfrac{\beta S_2(t) I_2(t)}{1+k_1 S_2(t)
+k_2 I_2(t)+k_3 S_2(t) I_2(t)}\bigg|\\
&\quad \leq\beta|S_1(t)I_1(t)-S_2(t)I_2(t)|+\beta k_1|S_1(t)||S_2(t)||I_1(t)-I_2(t)|\\ 
&\quad+\beta k_2|I_1(t)||I_2(t)||S_1(t)-S_2(t)|\\
&\quad \leq \beta|S_1(t)I_1(t)-S_2(t)I_1(t)|+\beta|S_2(t)I_1(t)-S_2(t)I_2(t)|\\
&\quad+\beta k_1|S_1(t)||S_2(t)||I_1(t)-I_2(t)| +\beta k_2|I_1(t)||I_2(t)||S_1(t)-S_2(t)|\\
&\quad \leq \beta|I_2(t)||S_1(t)-S_2(t)|+\beta|S_1(t)||I_1(t)-I_2(t)|\\
&\quad+\beta k_1|S_1(t)||S_2(t)||I_1(t)-I_2(t)| +\beta k_2|I_1(t)||I_2(t)||S_1(t)-S_2(t)|.
\end{aligned}
\end{gather}
Similarly, we can prove that
\begin{gather}
\begin{aligned}
&\bigg|\dfrac{\xi I_1(t)}{1+\gamma I_1(t)}-\dfrac{\xi I_2(t)}{1+\gamma I_2(t)}\bigg|
\leq \xi|I_1(t)-I_2(t)|.
\end{aligned}
\end{gather}
Then,
\begin{gather}
\begin{aligned}
&\|F(t, \Omega_1(t))-F(t, \Omega_2(t))\|_1\\
\quad&\leq \mu |S_1(t)-S_2(t)| +\lambda|R_1(t)-R_2(t)|+\beta|I_2(t)||S_1(t)-S_2(t)|\\
\quad&+\beta|S_1(t)||I_1(t)-I_2(t)|\\
\quad&+\beta k_1|S_1(t)||S_2(t)||I_1(t)-I_2(t)| +\beta k_2|I_1(t)||I_2(t)||S_1(t)-S_2(t)|\\
\quad&+\beta|I_2(t)||S_1(t)-S_2(t)|+\beta|S_1(t)||I_1(t)-I_2(t)|\\
\quad&+\beta k_1|S_1(t)||S_2(t)||I_1(t)-I_2(t)| +\beta k_2|I_1(t)||I_2(t)||S_1(t)-S_2(t)|\\
\quad&+(\mu+d+r)|I_1(t)-I_2(t)|+\xi|I_1(t)-I_2(t)|\\
\quad&+r|I_1(t)-I_2(t)|+(\mu+\lambda)|R_1(t)-R_2(t)|+\xi|I_1(t)-I_2(t)|\\
\quad&\leq(\mu+2\beta|I_2(t)|+2\beta k_2|I_1(t)||I_2(t)|)|S_1(t)-S_2(t)|\\
\quad&+(2\beta|S_1(t)|+2\beta k_1|S_1(t)||S_2(t)|+\mu+d+2r+2\xi)|I_1(t)-I_2(t)|\\
\quad&+(2\lambda+\mu)|R_1(t)-R_2(t)|
\end{aligned}
\end{gather}
\begin{gather*}
\begin{aligned}
&\leq(\mu+2\beta (\rho+I_0)+2\beta k_2 (\rho+I_0)^2)|S_1(t)-S_2(t)|+(2\beta (\rho+S_0)\\
\quad&+2\beta k_1 (\rho+S_0)^2+\mu +d+2r+2\xi)|I_1(t)-I_2(t)|+(2\lambda+\mu)|R_1(t)-R_2(t)|\\
\quad&\leq L\|\Omega_1(t)-\Omega_2(t)\|_1,
\end{aligned}
\end{gather*}
where
\begin{gather}
\begin{aligned}
L=Max\big\{\big(&\mu+2\beta (\rho+I_0)+2\beta k_2 (\rho+I_0)^2\big), 
\big(2\beta (\rho+S_0)+2\beta k_1 (\rho+S_0)^2+\mu\\
\quad&+d+2r+2\xi\big), \big(2\lambda+\mu\big)\big\}.
\end{aligned}
\end{gather}
It is clear that $L>0$. Then, $F$ is Lipschitz continuous 
in the second argument.
\end{proof}

\begin{theorem} 
\label{th3.2.1}
Let $W=\big\{\Omega\in (C^0[0, \delta])^3 : 
\Omega(0)=\Omega_0 , \Omega(t)\in \mathbf{B}(\Omega_0,\rho)\big\}$
be the space of continuous functions which is complete with norm 
$\| \Omega\|_W = \underset{t\in [0, \delta]}{sup}\|\Omega(t)\|_1$. 
Let $\| F(t,\Omega)\|_W\leq \bar N$ for $(t, \Omega)\in [0, \delta]\times\mathbf{B}(\Omega_0,\rho)$. 
Then the system \eqref{eq3.1} subject to conditions \eqref{eq3.2} has a unique solution 
on $[0, \delta]$ with  $\delta>0$ and 
$$
\delta < min\bigg\{T, \bigg(\dfrac{\rho B(\alpha)\Gamma(\alpha)}{\bar N}
+\alpha\Gamma(\alpha)-\Gamma(\alpha)\bigg)^{\alpha^{-1}}, 
\bigg(\dfrac{B(\alpha)\Gamma(\alpha)}{L}+\alpha\Gamma(\alpha)
-\Gamma(\alpha)\bigg)^{\alpha^{-1}}\bigg\}.
$$
\end{theorem}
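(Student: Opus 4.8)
The plan is to recast \eqref{eq3} as a fixed-point equation and invoke the Banach contraction principle on the complete space $W$. Applying Theorem~\ref{th3.1.1} componentwise, solving the system on $[0,\delta]$ is equivalent to finding a fixed point of the Picard operator
\[
(T\Omega)(t) = \Omega_0 + \frac{1-\alpha}{B(\alpha)}\,F(t,\Omega(t)) + \frac{\alpha}{B(\alpha)\Gamma(\alpha)}\int_0^t (t-p)^{\alpha-1} F(p,\Omega(p))\,dp .
\]
Since $W$ is a closed subset of the Banach space $(C^0[0,\delta])^3$ under $\|\cdot\|_W$, it is itself complete, so it suffices to show that $T$ maps $W$ into itself and is a contraction for a suitable $\delta$; the continuity of $T\Omega$ and the initial value are inherited from the representation in Theorem~\ref{th3.1.1}.

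First I would establish the self-mapping property. For $\Omega\in W$ and $t\in[0,\delta]$, the boundedness hypothesis $\|F(t,\Omega)\|_W\le\bar N$ together with $\int_0^t (t-p)^{\alpha-1}\,dp = t^\alpha/\alpha$ gives
\[
\|(T\Omega)(t)-\Omega_0\|_1 \le \frac{(1-\alpha)\bar N}{B(\alpha)} + \frac{\bar N\,t^\alpha}{B(\alpha)\Gamma(\alpha)} \le \frac{(1-\alpha)\bar N}{B(\alpha)} + \frac{\bar N\,\delta^\alpha}{B(\alpha)\Gamma(\alpha)} .
\]
Requiring the right-hand side to be at most $\rho$ and solving for $\delta^\alpha$ reproduces exactly the second bound in the stated minimum, namely $\delta^\alpha < \rho B(\alpha)\Gamma(\alpha)/\bar N + \alpha\Gamma(\alpha) - \Gamma(\alpha)$, which secures $T\Omega\in\mathbf{B}(\Omega_0,\rho)$.

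Next I would prove the contraction estimate. For $\Omega_1,\Omega_2\in W$, inserting the Lipschitz bound of Lemma~\ref{lem3.2.1} into both the pointwise term and the convolution integral, and again using $\int_0^t (t-p)^{\alpha-1}\,dp = t^\alpha/\alpha$, I obtain
\[
\|(T\Omega_1)(t)-(T\Omega_2)(t)\|_1 \le L\left(\frac{1-\alpha}{B(\alpha)} + \frac{\delta^\alpha}{B(\alpha)\Gamma(\alpha)}\right)\|\Omega_1-\Omega_2\|_W .
\]
Demanding that the prefactor be strictly less than $1$ and solving for $\delta^\alpha$ yields precisely the third bound, $\delta^\alpha < B(\alpha)\Gamma(\alpha)/L + \alpha\Gamma(\alpha) - \Gamma(\alpha)$. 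Choosing $\delta$ below the minimum of $T$ and these two quantities makes $T$ a contraction of $W$ into itself, so the Banach fixed point theorem furnishes a unique fixed point, which is the unique solution of \eqref{eq3.1}--\eqref{eq3.2} on $[0,\delta]$.

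I expect the main difficulty to be bookkeeping rather than conceptual: the Lipschitz continuity driving the contraction is already delivered by Lemma~\ref{lem3.2.1}, so the delicate point is aligning the two integral estimates with the stated $\delta$-thresholds and checking that a single $\delta$ can meet the invariance and contraction requirements simultaneously. This is exactly why $\delta$ is taken as the minimum of the three expressions, the horizon $T$ guarding the domain of $F$, the $\bar N$-threshold guaranteeing invariance, and the $L$-threshold guaranteeing contraction.
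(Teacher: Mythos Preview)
Your proposal is correct and follows essentially the same approach as the paper: define the Picard operator via the integral representation of Theorem~\ref{th3.1.1}, use the bound $\bar N$ together with $\int_0^t (t-p)^{\alpha-1}\,dp = t^\alpha/\alpha$ to obtain the invariance threshold, use the Lipschitz constant $L$ from Lemma~\ref{lem3.2.1} to obtain the contraction threshold, and conclude by the Banach fixed point theorem. The only cosmetic differences are notation ($T$ versus $\Theta$) and that you explicitly note why $W$ is complete, which the paper omits.
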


\begin{proof}	
The fixed-point theorem in Banach space $W$ can be employed here.
For that, the Picard's operator $\Theta$ is defined between 
the functional space $W$ into itself, as follows:
\begin{gather}
\Theta : W \rightarrow W,
\end{gather}
such that
\begin{equation}
\label{eq3.2.1}
\Theta\Omega(t)=\Omega_0 + \dfrac{1-\alpha}{B(\alpha)}F(t,\Omega(t)) 
+ \dfrac{\alpha}{B(\alpha)\Gamma(\alpha)}\int_{0}^t F(p,\Omega(p)) (t-p)^{\alpha -1}dp.
\end{equation}
We show that $\Theta$ maps $W$ to $W$. Due to the fact that there is no disease 
that is able to kill the whole world population, we can suppose that $\Omega(t)\in W$. 
We can also assume that the solution is bounded within a period of time since the number 
of targeted population is finite. Next, we show that $\Theta\Omega(t)\in W$:
\begin{gather}
\begin{aligned}
\| \Theta\Omega-\Omega_0\|_W
&= \underset{t\in[0, \delta]}{sup} \bigg\|\dfrac{1-\alpha}{B(\alpha)}F(t,\Omega(t)) 
+ \dfrac{\alpha}{B(\alpha)\Gamma(\alpha)}\int_{0}^t F(p,\Omega(p)) (t-p)^{\alpha -1}dp\bigg\|_1\\
\quad&\leq  \dfrac{1-\alpha}{B(\alpha)}\times\underset{t\in[0, \delta]}{sup}\big\|F(t,\Omega(t))\big\|_1 
+ \dfrac{\alpha}{B(\alpha)\Gamma(\alpha)}\\
&\quad\times\underset{t\in[0, \delta]}{sup}\int_{0}^t \big\|F(p,\Omega(p))\big\|_1 (t-p)^{\alpha -1}dp\\
\quad&\leq  \dfrac{1-\alpha}{B(\alpha)}\times\underset{t\in[0, \delta]}{sup}\big\|F(t,\Omega(t))\big\|_1 
+ \dfrac{\alpha}{B(\alpha)\Gamma(\alpha)}\\
&\quad\times\int_{0}^\delta \underset{p\in[0, \delta]}{sup}\big\|F(p,\Omega(p))\big\|_1 (t-p)^{\alpha -1}dp\\
\quad&\leq \dfrac{(1-\alpha)}{B(\alpha)}\bar N 
+ \dfrac{\alpha}{B(\alpha)\Gamma(\alpha)}\bar N\int_{0}^\delta (t-p)^{\alpha -1}dp\\
\quad&\leq \dfrac{(1-\alpha)}{B(\alpha)}\bar N 
+ \dfrac{\alpha}{B(\alpha)\Gamma(\alpha)}\bar N\int_{0}^\delta (t-p)^{\alpha -1}dp\\
\quad&\leq \dfrac{(1-\alpha)}{B(\alpha)}\bar N 
+ \dfrac{\delta^\alpha}{B(\alpha)\Gamma(\alpha)}\bar N.
\end{aligned}
\end{gather}
Then, it is necessary that $\dfrac{(1-\alpha)}{B(\alpha)}\bar N 
+ \dfrac{\delta^\alpha}{B(\alpha)\Gamma(\alpha)}\bar N \leq \rho$. 
This implies that
\begin{gather}
\delta<\bigg(\dfrac{\rho B(\alpha)\Gamma(\alpha)}{\bar N}
+\alpha\Gamma(\alpha)-\Gamma(\alpha)\bigg)^{\alpha^{-1}}
\end{gather}
and $\Theta$ maps $W$ into itself.
	
Using the definition of the operator defined in \eqref{eq3.2.1}, we deduce the following:
\begin{equation}
\begin{aligned}
&\|\Theta\Omega_1-\Theta\Omega_2\|_W=\underset{t\in [0, \delta]}{sup}\bigg\|\dfrac{1-\alpha}{B(\alpha)}\{F(t,\Omega_1(t))-F(t,\Omega_2(t))\}\\
&\quad +\dfrac{\alpha}{B(\alpha)\Gamma(\alpha)}\int_{0}^t \{F(p,\Omega_1(p))
-F(p,\Omega_2(p)) \}(t-p)^{\alpha -1}dp \bigg\|_1\\
&\leq \dfrac{1-\alpha}{B(\alpha)}\times\underset{t
\in [0, \delta]}{sup}\big\|F(t,\Omega_1(t))-F(t,\Omega_2(t)) \big\|_1\\
&\quad +\dfrac{\alpha}{B(\alpha)\Gamma(\alpha)}\times\underset{t\in [0, \delta]}{sup}
\int_{0}^t \big\|F(p,\Omega_1(p))-F(p,\Omega_2(p)) \big\|_1(t-p)^{\alpha -1}dp.
\end{aligned}
\end{equation}
In Lemma~\ref{lem3.2.1}, we have shown that $F$ is Lipschitzian 
with respect to the second argument. Then,
\begin{equation}
\begin{aligned}
&\|\Theta\Omega_1-\Theta\Omega_2\|_W
\leq \dfrac{(1-\alpha)L}{B(\alpha)}\times\underset{t\in [0, \delta]}{sup}\big\|\Omega_1(t)-\Omega_2(t)\big\|_1\\
&\quad +\dfrac{\alpha L}{B(\alpha)\Gamma(\alpha)}\int_{0}^\delta \big\|\Omega_1(p)-\Omega_2(p) \big\|_1(t-p)^{\alpha -1}dp\\
&\leq \dfrac{(1-\alpha)L}{B(\alpha)}\times\underset{t\in [0, \delta]}{sup}\big\|\Omega_1(t)-\Omega_2(t)\big\|_1\\
&\quad +\dfrac{\alpha L}{B(\alpha)\Gamma(\alpha)}\int_{0}^\delta \underset{p\in [0, \delta]}{sup}\big\|\Omega_1(p)
-\Omega_2(p) \big\|_1(t-p)^{\alpha -1}dp\\
&\leq \dfrac{(1-\alpha)L}{B(\alpha)}\big\|\Omega_1-\Omega_2\big\|_W 
+\dfrac{\alpha L}{B(\alpha)\Gamma(\alpha)}\int_{0}^\delta \big\|\Omega_1
-\Omega_2\big\|_W(t-p)^{\alpha-1}dp\\
&\leq \bigg(\dfrac{(1-\alpha)L}{B(\alpha)}+\dfrac{ \delta^\alpha L}{B(\alpha)\Gamma(\alpha)}
\bigg) \big\|\Omega_1-\Omega_2\big\|_W.
\end{aligned}
\end{equation}
Thus, the defined operator $\Theta$ is a contraction 
with a unique fixed point $\Omega\in W$  
if $L\bigg(\dfrac{(1-\alpha)}{B(\alpha)}+\dfrac{ \delta^\alpha }{B(\alpha)\Gamma(\alpha)}\bigg) < 1$, 
which implies that
\begin{gather}
\label{condition}
\delta < \bigg(\dfrac{B(\alpha)\Gamma(\alpha)}{L}
+\alpha\Gamma(\alpha)-\Gamma(\alpha)\bigg)^{\alpha^{-1}}.
\end{gather}
This shows that the system under investigation has a unique solution.
\end{proof}		


\section{Stability analysis of Picard's iteration method}
\label{sec4}

In the following, we study the stability of our iterative 
scheme proposed in \eqref{eq3.1.5} to show its convergence.

\begin{definition}[See\cite{ref8}] 
\label{def4.1}
Let $(X,\|\cdot\|)$ be a Banach space and $\Theta$ a self-map of $X$. 
Let $y_{n+1}=g(\Theta,y_n)$ be a particular recursive procedure. 
Suppose that $F_P(\Theta)$ is the fixed-point set of $\Theta$ and has 
at least one element and that $y_n$ converges to a point $p\in F_P(\Theta)$. 
Let $\{x_n\}\subseteq X$ and define $e_n=\|x_{n+1}-g(\Theta,x_n)\|$.
If $\lim\limits_{n \rightarrow +\infty}e_n=0$ implies that 
$\lim\limits_{n \rightarrow +\infty}x_n=p$, then the iteration method 
$y_{n+1}=g(\Theta,y_n)$ is said to be $\Theta$-stable.
\end{definition}

\begin{remark}[See \cite{ref8}] 
\label{rk4}
Without any loss of generality, we must assume that $\{x_n\}$ is bounded. 
Otherwise, if $\{x_n\}$ is not bounded, then it cannot converge. If all  
conditions in Definition~\ref{def4.1} are satisfied for $y_{n+1}=\Theta(y_n)$, 
which is known as Picard's iteration, as a consequence the iteration will be $\Theta$-stable.
\end{remark}

We shall use the following result in the proof of our Theorem~\ref{th4.2.2}. 

\begin{theorem}[See \cite{ref8}] 
\label{th4.2.1}
Let $(X,\|\cdot\|)$ be a Banach space and $\Theta$ 
a self-map of $X$ satisfying
\begin{gather}
\label{condition2}
\|\Theta(x)-\Theta(y)\|\leq K\|\Theta(x)-x\|+k\|x-y\|
\end{gather}
for all $x$, $y$ in $X$ where $0\leq K$, $0\leq k<1$. 
Suppose that $\Theta$ has a fixed point. Then, 
$\Theta$ is Picard's $\Theta$-stable.	
\end{theorem}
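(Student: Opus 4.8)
The plan is to unwind the definition of $\Theta$-stability (Definition~\ref{def4.1}) in the special case of Picard's iteration, where $g(\Theta,y_n)=\Theta(y_n)$, and then to reduce the convergence claim to a scalar recurrence controlled by the hypothesis~\eqref{condition2}. Let $p$ denote a fixed point of $\Theta$, so that $\Theta(p)=p$, and let $\{x_n\}\subseteq X$ be an arbitrary sequence with error terms $e_n=\|x_{n+1}-\Theta(x_n)\|$. Assuming $e_n\to 0$, I must deduce that $x_n\to p$.

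First I would estimate $\|x_{n+1}-p\|$ by inserting $\Theta(x_n)$ and using the triangle inequality,
\[
\|x_{n+1}-p\|\leq\|x_{n+1}-\Theta(x_n)\|+\|\Theta(x_n)-\Theta(p)\|
=e_n+\|\Theta(x_n)-\Theta(p)\|.
\]
The crucial step is to apply~\eqref{condition2} with the substitution $x=p$, $y=x_n$: since $\Theta(p)=p$, the term $K\|\Theta(p)-p\|$ vanishes and one is left with $\|\Theta(x_n)-\Theta(p)\|\leq k\|x_n-p\|$. Writing $a_n=\|x_n-p\|$, these two inequalities combine into the linear recurrence $a_{n+1}\leq k\,a_n+e_n$ with $0\leq k<1$.

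Next I would iterate this recurrence to obtain
\[
a_{n+1}\leq k^{\,n+1}a_0+\sum_{i=0}^{n}k^{\,n-i}e_i.
\]
Since $0\leq k<1$, the first term tends to $0$. For the convolution sum I would show that it tends to $0$ whenever $e_i\to 0$: given $\varepsilon>0$, choose $N$ with $e_i<\varepsilon$ for $i>N$, split the sum at $N$, bound the tail by $\varepsilon\sum_{j\geq 0}k^{\,j}=\varepsilon/(1-k)$, and note that the finite head tends to $0$ as $n\to\infty$ because each factor $k^{\,n-i}\to 0$. Letting $\varepsilon\to 0$ gives $a_n\to 0$, that is $x_n\to p$, which is precisely $\Theta$-stability in the sense of Definition~\ref{def4.1}.

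The argument is short, and the only delicate point is the convolution sum $\sum_{i=0}^{n}k^{\,n-i}e_i$: one cannot simply pass to the limit inside it, since the number of summands grows with $n$. The $\varepsilon$-splitting above (equivalently, an appeal to a Toeplitz/Ces\`aro summability lemma) is the single place where some care is needed; everything else follows from the triangle inequality together with the one well-chosen substitution $x=p$, $y=x_n$ in~\eqref{condition2}.
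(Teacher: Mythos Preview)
Your argument is correct. The paper itself does not prove this theorem; it merely quotes it from \cite{ref8} and then invokes it in the proof of Theorem~\ref{th4.2.2}. So there is no ``paper's own proof'' to compare against, and what you wrote is in fact the standard short proof of the result: substitute the fixed point for $x$ in \eqref{condition2} so that the $K$-term drops out, derive the scalar recursion $a_{n+1}\le k\,a_n+e_n$, and conclude by the discrete-convolution/Toeplitz argument you gave.

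Two minor remarks. First, Definition~\ref{def4.1} also presupposes that the genuine Picard sequence $y_{n+1}=\Theta(y_n)$ converges to the fixed point $p$; you did not state this, but it is immediate from your own inequality with $e_n\equiv 0$ (then $a_{n+1}\le k\,a_n$). Second, the hypothesis ``$\Theta$ has a fixed point'' together with \eqref{condition2} forces that fixed point to be unique (put $x=p_1$, $y=p_2$), so there is no ambiguity about which $p$ the approximate sequence must approach. Neither of these is a gap in your reasoning---they just round out the write-up.
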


\begin{theorem} 
\label{th4.2.2}
The system \eqref{eq3.1.5} is $\Theta$-stable in the Banach space $W$ 
if it satisfies the condition \eqref{condition}.
\end{theorem}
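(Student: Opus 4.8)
The plan is to recognize the recursive scheme \eqref{eq3.1.5} as the Picard iteration $\Omega_{n+1}=\Theta(\Omega_n)$ generated by the operator $\Theta$ of \eqref{eq3.2.1}, and then to verify, one at a time, the hypotheses of Theorem~\ref{th4.2.1}. Reading \eqref{eq3.1.5} componentwise against the definition \eqref{eq3.2.1} shows that each triple $\Omega_{n+1}=(S_{n+1},I_{n+1},R_{n+1})$ is exactly $\Theta$ applied to the previous iterate $\Omega_n$, so that \eqref{eq3.1.5} coincides with the iteration $y_{n+1}=\Theta(y_n)$ singled out in Remark~\ref{rk4}. It therefore suffices to check that $\Theta$ meets the criterion of Theorem~\ref{th4.2.1} on the Banach space $W$.

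First I would recall, directly from Theorem~\ref{th3.2.1}, the two structural inputs that are needed: that $(W,\|\cdot\|_W)$ is complete, and that $\Theta$ is a self-map of $W$ possessing a (unique) fixed point $\Omega\in W$. Both were obtained while proving Theorem~\ref{th3.2.1}, the first from completeness of the space of continuous functions and the second from the Banach fixed-point argument carried out there under condition \eqref{condition}. No new work is required at this stage.

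The key step is to produce the inequality \eqref{condition2}. Here I would reuse the contraction estimate established at the end of the proof of Theorem~\ref{th3.2.1}, namely
\begin{equation*}
\|\Theta\Omega_1-\Theta\Omega_2\|_W
\leq \bigg(\dfrac{(1-\alpha)L}{B(\alpha)}+\dfrac{\delta^\alpha L}{B(\alpha)\Gamma(\alpha)}\bigg)\|\Omega_1-\Omega_2\|_W .
\end{equation*}
Choosing $K=0$ and $k=\dfrac{(1-\alpha)L}{B(\alpha)}+\dfrac{\delta^\alpha L}{B(\alpha)\Gamma(\alpha)}$ turns this bound into precisely \eqref{condition2}, with $0\leq K$ holding trivially. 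The only genuine point to verify is that $0\leq k<1$: nonnegativity is immediate since $L>0$ by Lemma~\ref{lem3.2.1} and $0\leq\alpha\leq1$, while the strict bound $k<1$ is equivalent to $L\big(\tfrac{1-\alpha}{B(\alpha)}+\tfrac{\delta^\alpha}{B(\alpha)\Gamma(\alpha)}\big)<1$, which is exactly the hypothesis \eqref{condition} assumed in the statement.

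With all hypotheses of Theorem~\ref{th4.2.1} in place — a Banach space $W$, a self-map $\Theta$ obeying \eqref{condition2} with $0\leq K$ and $0\leq k<1$, and an existing fixed point — I would conclude at once that $\Theta$ is Picard $\Theta$-stable, which is the assertion for the scheme \eqref{eq3.1.5}. I do not anticipate a substantive obstacle: the whole argument reduces to matching the already-proven contraction estimate of Theorem~\ref{th3.2.1} to the template of Theorem~\ref{th4.2.1}. The most delicate point is mere bookkeeping, namely confirming that the constant $k$ inherited from Theorem~\ref{th3.2.1} lies strictly below $1$, which holds as soon as condition \eqref{condition} is in force; this is precisely why the theorem is stated conditionally on \eqref{condition}.
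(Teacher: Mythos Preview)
Your proposal is correct and follows essentially the same approach as the paper: identify \eqref{eq3.1.5} with the Picard iteration of the operator $\Theta$, invoke the contraction estimate already obtained in Theorem~\ref{th3.2.1} (the paper re-derives it briefly rather than citing it), and apply Theorem~\ref{th4.2.1} with $K=0$ and $k=L\big(\tfrac{1-\alpha}{B(\alpha)}+\tfrac{\delta^\alpha}{B(\alpha)\Gamma(\alpha)}\big)<1$ under condition \eqref{condition}. If anything, your write-up is more explicit than the paper's in checking the hypotheses of Theorem~\ref{th4.2.1}.
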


\begin{proof}
Let $n,m\in \mathbb{N}$. We put $\Omega_n=\big(S_{n}, I_{n}, R_{n}\big)$ 
and $\Theta(\Omega_n)=\big(S_{n+1}, I_{n+1}, R_{n+1}\big)$. In order to show 
that $P$ admits a fixed point, we introduce the norm on both sides and we have
\begin{gather}
\begin{aligned}
&\big\|\Theta(\Omega_{n}(t))-\Theta(\Omega_{m}(t))\big\|_1
\leq\dfrac{1-\alpha}{B(\alpha)}\big\|F(t,\Omega_{n}(t))-F(t,\Omega_{m}(t))\big\|_1\\
&\quad+\dfrac{\alpha}{B(\alpha)\Gamma(\alpha)}\int_{0}^t \big\|F(t,\Omega_{n}(t))
-F(t,\Omega_{m}(t))\big\|_1 (t-p)^{\alpha -1}dp.
\end{aligned}
\end{gather}
Then, the same arguments as before lead to
\begin{gather}
\begin{aligned}
\big\|\Theta(\Omega_{n})-\Theta(\Omega_{m})\big\|_W
&\leq L\bigg(\dfrac{1-\alpha}{B(\alpha)}+\dfrac{\delta^\alpha}{B(\alpha)\Gamma(\alpha)}
\bigg)\big\|\Omega_{n}-\Omega_{m}\big\|_W.
\end{aligned}
\end{gather}
If condition \eqref{condition} is satisfied, 
then \eqref{condition2} of Theorem~\ref{th4.2.1} holds. 
This completes the proof.
\end{proof}	


\section{Numerical Simulations}
\label{sec5}

We present numerical simulations of the special solution of our model using 
the Adams--Bashforth--Moulton method presented in \cite{ref27} 
for different arbitrary values of the fractional order $\alpha$.
The convergence and stability of this method apply at very long intervals. 
It is a reliable and a fast alternative for the approximate evaluation 
of Mittag--Leffler functions\cite{ref}. Moreover, it is very useful 
for the numerical evaluation of other special functions arising 
in fractional calculus \cite{ref}. In our numerical computations, 
we consider the following values of the parameters: $\Lambda = 0.7$, 
$\mu = 0.1$, $\lambda=0.02$, $\beta = 0.1$, $r = 0.02$, $d=0.01$, 
$\xi=0.07$, $\gamma=0.1$, $k_1 = 0.1$, $k_2 = 0.02$, and $k_3 = 0.002$ 
with initial conditions $S(0) = 3.0$, $I(0) = 2.0$, $R(0) = 1.0$. 
The numerical results given in Figure~\ref{fig01} show numerical 
simulations of the special solution of our model as a function 
of time for different values of $\alpha$.


\begin{figure}[ht]
\centering
\includegraphics[scale=0.5]{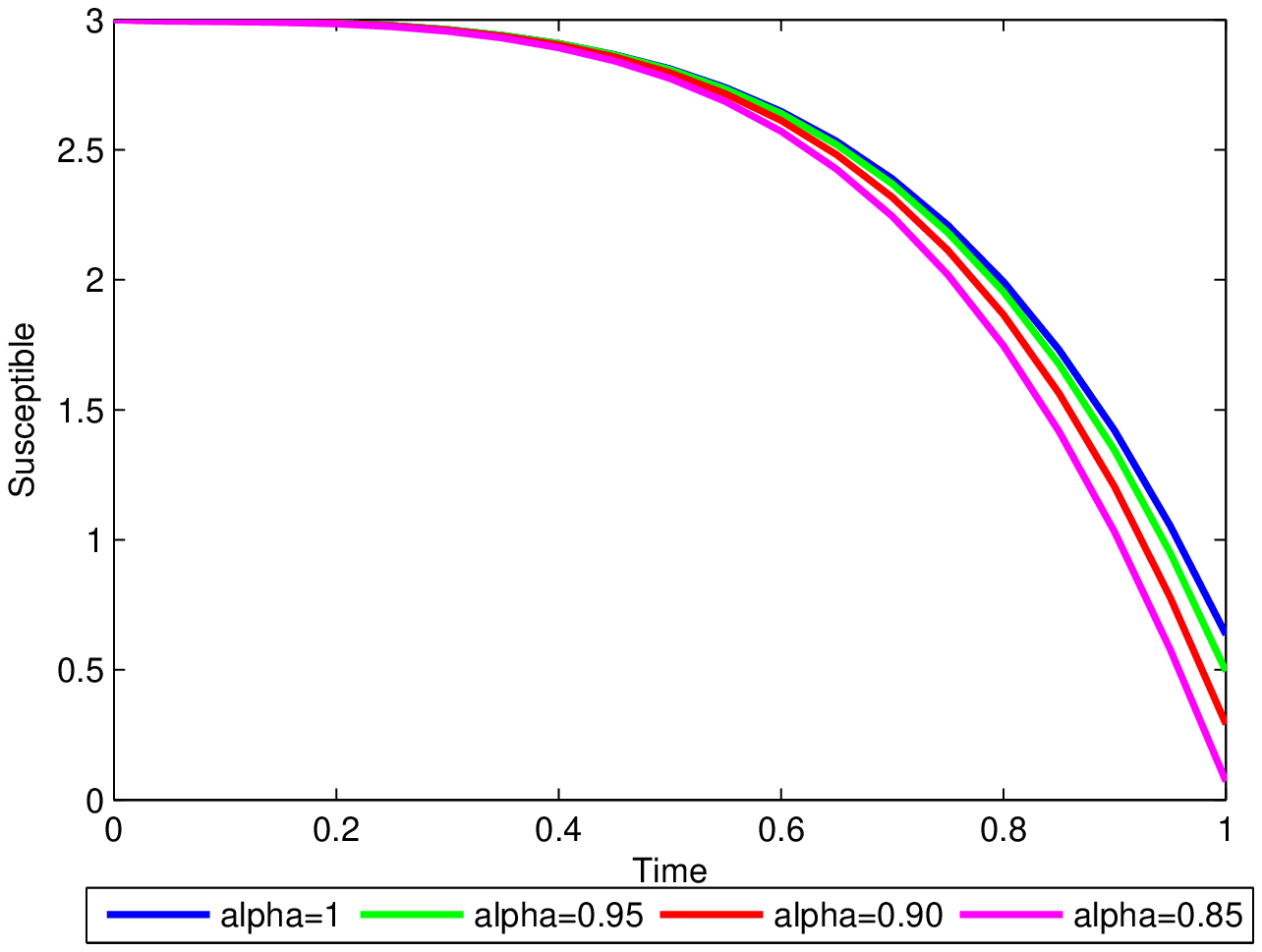}
\centering
\includegraphics[scale=0.5]{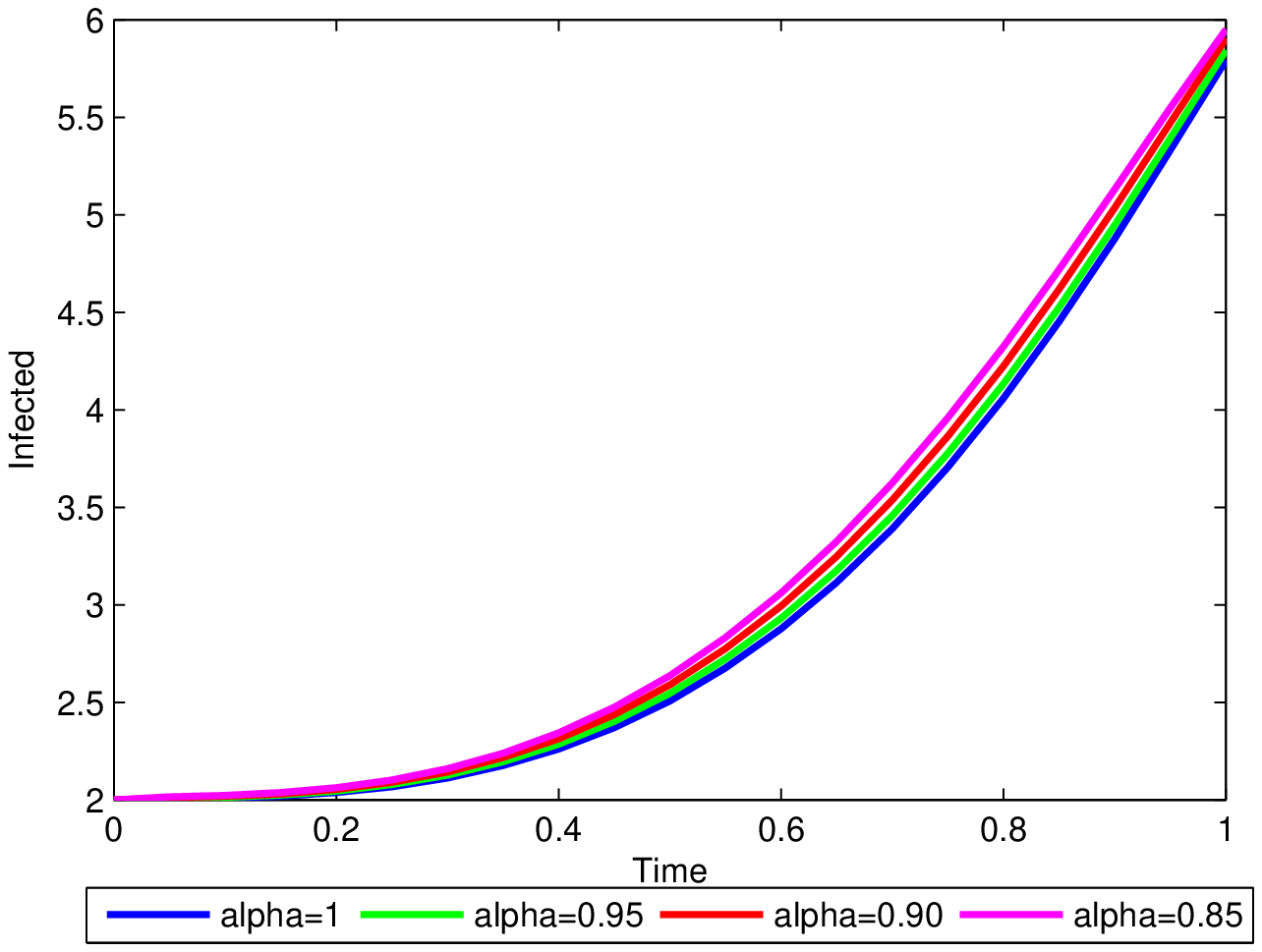}
\centering
\includegraphics[scale=0.5]{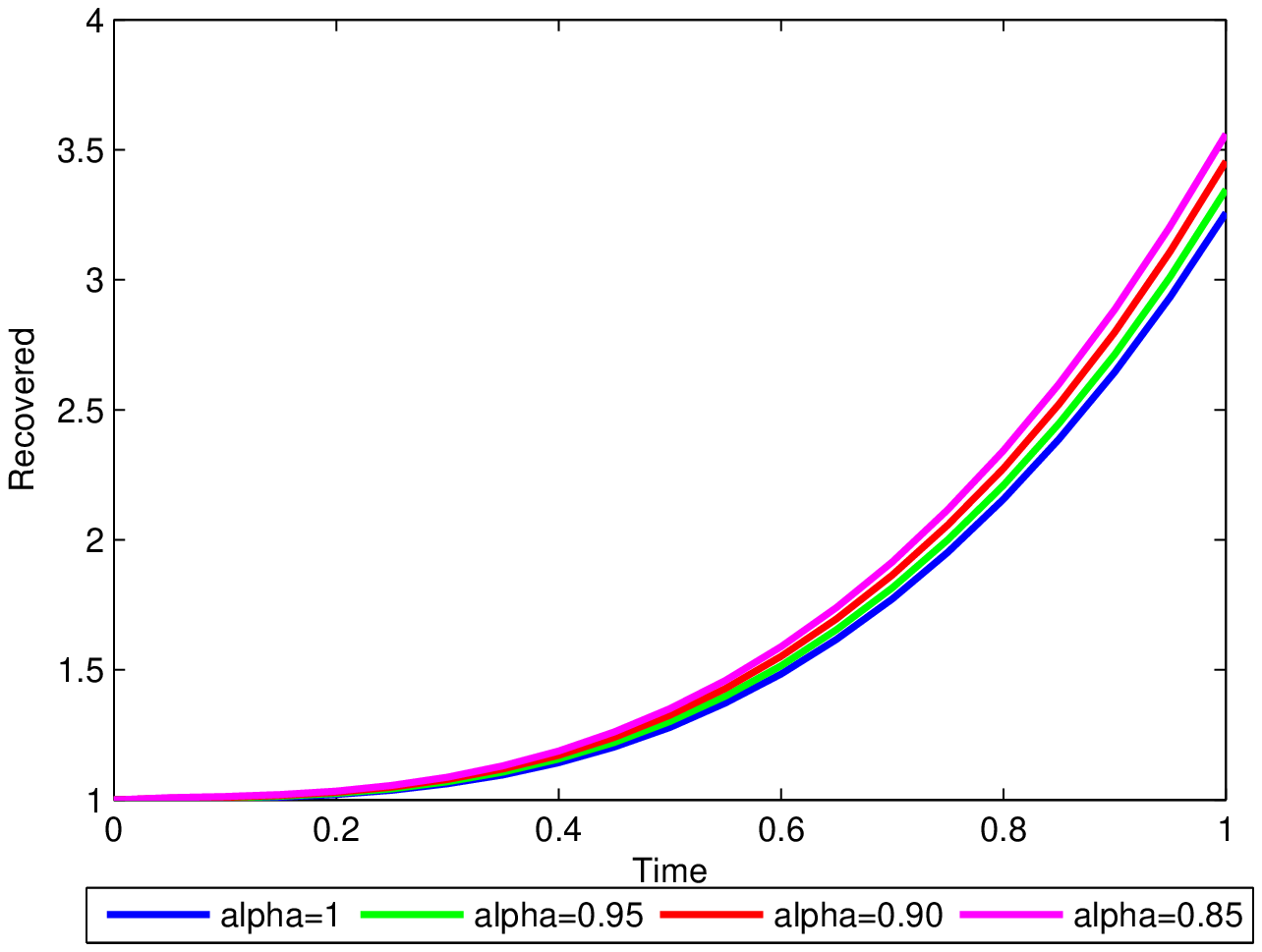}
\caption{Nature of solution with respect to time for different values of $\alpha$.}
\label{fig01}
\end{figure}


Figure~\ref{fig01} shows the dynamics of the interactions between the compartments 
of the susceptible and the infected in a particular environment. We note that when 
the number of susceptible people decreases the number of infected people increases.


\section{Conclusion}

Recently, the fractional derivative of Atangana--Baleanu in the sense of Caputo 
has been used with great success for some applications in different scientific fields. 
This notion of fractional differentiation is based on the Mittag--Leffler function. 
In our work, we extended the SIRS model with a saturated treatment and a nonlinear 
incidence function using the concept of fractional differentiation of Atangana--Baleanu--Caputo. 
We have shown the existence and uniqueness of solution for our model using a fixed point theorem. 
Analysis of the stability of the iterative scheme is validated via the $\Theta$-stable approach. 
Finally, some numerical simulations were presented for different values of $\alpha$.

Since our epidemic model contains a saturated treatment function, 
as future work we plan to study the influence of the parameter 
$ \gamma $ on the stability of our system, which leads to the case 
of multiple equilibrium points and shows bifurcation phenomena.


\section*{Acknowledgements}

Torres was partially funded by FCT,
project UIDB/04106/2020 (CIDMA).


\renewcommand{\bibname}{References}


\end{document}